\newtheorem{Proof}{Proof.}
 \newenvironment{proof}{\begin{Proof}\rm}{\hfill $\Box$ \end{Proof}}
 \newtheorem{theo}{Preservation and Definability Theorem.}
 \newtheorem{theorem}{Theorem.}
\begin{document}
\title{A definitional view of Vogt's variant of the Mazur-Ulam theorem}
\author{Victor Pambuccian}
\date{corrected version of paper published in {\em Algebra, Geometry \& Their Applications. Seminar Proceedings} \bf{1} (2001), 5--10. }
\maketitle

\section{Introduction}

Perhaps the earliest result in the discipline that has come to be known
as {\em characterizations of geometric transformations by
weak hypotheses\/} is the theorem, proved by {\sc S. Mazur} and
{\sc S. Ulam} \cite{mu}, that surjective isometries between real normed spaces
are affine transformations. It has been widely generalized,
one of these generalizations being {\sc A. Vogt}'s \cite{vog} theorem
that equidistance-preserving transformations betweeen real normed spaces
of dimension $\geq 2$ are affine similarities.
{\sc F. Skof} \cite{sko} has provided conditions
for the conclusion of Vogt's theorem to hold without the
surjectivity requirement.

The aim of this note is to rephrase the theorems of
{\sc Vogt} and {\sc Skof} as theorems about the definability of a certain
geometric notion in terms of another one. To be precise,
whenever a theorem tells us that a map that preserves a certain
relation $\varrho$ must preserve another one $\varrho'$ as well,
we are told that $\varrho'$ is implicitly definable
in terms of $\varrho$. If the theorem can be phrased in a logical
language that satisfies  {\sc Beth}'s theorem, such as first-order logic
or ${\sf L}_{\omega_1\omega}$, then we know that there  must
be an explicit definition of $\varrho'$ in terms of
$\varrho$.  This fact is best expressed as (cf.\
\cite[Th.\ 6.6.4, Ex.\ 6.6.2]{hod}, \cite{mak}, \cite{kei})

\begin{theo}
Let ${\sf L}\subseteq {\sf L}^+$ be two first-order or
${\sf L}_{\omega_1\omega}$
languages containing a sign for an identically
false formula, ${\cal T}$ be a theory in ${\sf L}^+$,
and
$\varphi({\bf X})$ be an ${\sf L}^+$-formula in the free variables
${\bf X} = (X_1, \ldots, X_n)$. Then the following assertions are
$\varphi({\bf X})$ be an ${\sf L}^+$-formula in the free variables
${\bf X} = (X_1, \ldots, X_n)$. Then the following assertions are
equivalent:\\
$($i$)$ there is an  ${\sf L}$-formula $\psi({\bf X})$ $($which is positive existentia
l; positive existential, but negated equality is allowed; positive$)$
such
that ${\cal T}\vdash \varphi({\bf X}) \leftrightarrow \psi({\bf X})$;\\
$($ii$)$ for any $\mathfrak{A}, \mathfrak{B}$ $\in Mod({\cal T})$, and
each ${\sf L}$-isomorphism $({\sf L}$-homomorphism; ${\sf L}$-monomorphism;
 ${\sf L}$-epimorphism$)$
$f:\mathfrak{A}\rightarrow \mathfrak{B}$, the
following condition is satisfied:

if ${\bf c}\in \mathfrak{A}^n$ and $\mathfrak{A}\models \varphi({\bf c})$, then
$\mathfrak{B}$$\models \varphi(f({\bf c}))$.
\end{theo}

The theorems of {\sc Vogt} and {\sc Skof} are stated in the form (ii); the
aim of our paper is to state them purely syntactically, as in
(i), for we believe that the syntactic understanding  sheds
new light into the mechanics of these theorems.

\section{ The Axiomatic Set-Up}

To apply the above theorem, we need to find a theory ${\cal T}$,
among whose models are real normed linear spaces. Since we believe that
the Archimedean axiom is needed\footnote{ I know of no example of an
 equidistance preserving transformation between normed spaces with
 non-Archimedean ordered scalar fields that would not be an affine
 similarity, although the known proofs of {\sc Vogt}'s theorem
 depend heavily upon the Archimedeanity of the real field.}
, we cannot express ${\cal T}$ inside
first-order logic, but will have to work within ${\sf L}_{\omega_1\omega}$
(or within transitive closure logic, which is weaker than
${\sf L}_{\omega_1\omega}$, but not as easily readable\footnote{ One can easily
translate our definitions into TC logic, where the fact that there
is an explicit definition may be stronger, for I don't know whether
our Preservation and Definability Theorem remains valid in TC logic.}).

Our theorem will be weaker than {\sc Vogt}'s, since we require not
only the preservation of equidistance, but the
preservation of the negation of equidistance as well,
i.\ e.\ the map $f$ between two real normed vector spaces
needs to satisfy not only
$\|x-y\|=\|u-v\| \Rightarrow  \|f(x)-f(y)\|=\|f(u)-f(v)\|$,
but also the reverse implication, so we have $\Leftrightarrow$ instead
of $\Rightarrow$. It is an open problem whether {\sc Vogt}'s theorem remains
valid if we replace ${\mathbb R}$ with any Archimedean ordered
field. Both {\sc Vogt}'s proof \cite{vog} and the proof of {\sc Vogt}'s
 theorem in
\cite[p.\ 626-627]{os} use particular topological properties
of real normed spaces, such as the fact that the complement of
a sphere consists of two connected components, which are no longer
valid in our setting.

The theory ${\cal T}$ we are looking for is axiomatized in a language
${\sf L}_{\omega_1\omega}$, where ${\sf L}:= {\sf L}(B, \equiv)$, with
individual variables to be interpreted as {\em points}, and two
relation symbols, a ternary one $B$ and a quaternary one $\equiv$,
with with $B(abc)$ to be read as `point $b$ lies between $a$ and $c$
 (being allowed to be equal to one or to both endpoints)',
 and $ab\equiv cd$ to be read as
`$a$ is as distant from $b$ as $c$ is from $d$', or equivalently
`segment $ab$ is congruent to segment $cd$'.

Let $\Delta$ be an ${\sf L}(B)$ axiom system for
ordered Desarguesian affine spaces of dimension $\geq 2$.
One can easily obtain such an axiom system for spaces of
dimension $\geq 3$ by first rephrasing the
axioms given by {\sc Kusak} \cite{kus} vor affine spaces of dimension
$\geq 3$ in terms of collinearity $L$, with
$L(abc)$ to be read `points $a,b,c$ are collinear',
instead of parallelism, then replacing every occurrence of
$L(xyz)$ by $B(xyz)\vee B(yzx)\vee B(zxy)$, and adding order
axioms, e.\ g.\ as in \cite{pc}. Let $\theta$ be the sentence obtained
as the conjunction of all the  axioms mentioned above. Let $\theta'$ be the
conjunction of all the axioms for  Desarguesian ordered affine planes
from \cite{szm}. Then $\Delta$ may be chosen to be $\theta\vee \theta'$.

We further need \\
(a) axioms to ensure that $\equiv$ is a nondegenerate
equivalence relation between segments, i.\ e.\ $ab\equiv ba$,
$ab\equiv cd \wedge ab\equiv ef \rightarrow cd\equiv ef$,
$aa\equiv bb$, $ab\equiv cc \rightarrow a=b$;\\
(b) a segment transport axiom, such as
\[(\forall abc)(\exists d)(\forall e)\, [B(cad) \wedge
 ab\equiv ad \wedge (a\neq c \wedge B(cae)
  \wedge ab\equiv ae \rightarrow d=e)];\]
(c) an axiom stating that the affinely defined midpoint of any
segment $ab$ (as the intersection point of $ab$ with $cd$,
where $c$ and $d$ are two different points with
$ac\parallel bd$ and $ad\parallel bc$ ) is equidistant from $a$
and $b$;\\
(d) an axiom stating that if $abcd$ is a parallelogram with
$ab\parallel cd$ and $bc\parallel ad$, then $ab\equiv cd$ and
$bc\equiv ad$;\\
(e) an axiom stating that one obtains an isosceles triangle by
drawing a parallel to the base of an isosceles triangle, i.\ e.\
\[L(oab) \wedge L(oa'b')\wedge \neg L(oaa')\wedge b\neq o\wedge oa\equiv oa'
\wedge aa'\parallel bb'\rightarrow ob\equiv ob';\]
(f) the weak (equality is allowed)
triangle inequality, i.\ e.\  (the `triangle' is $abc$,
but notice that there is no condition of non-collinearity for
these points)
\[(B(ba'c)\vee B(bca'))\wedge ba\equiv ba'\wedge
B(ba'c') \wedge a'c'\equiv ac \rightarrow B(bcc');\]
(g) an axiom stating that there is a `triangle' whose sides are
three given segments  that satisfy the weak triangle inequality;\\
(h)  with the relation $\leq$  of inequality
between the lengths of segments defined, as in Schnabel \cite{schna}, by
\[ab\leq cd:\leftrightarrow (\forall m)(\exists s)\,
 [cm\equiv dm \rightarrow ab\equiv cs \wedge cm\equiv sm]\]
the axiom stating that any two segments are comparable under $\leq$,
i.\ e.\ $ab\leq cd \vee cd\leq ab$;\\
(i) the Archimedean axiom, which is the only one whose expression  requires infinitary logic,
and which may be written, with $P(abcd)$ standing for $a, b, c, d$ are
the four vertices of a parallelogram, i.\ e.\
 $ab\parallel cd$  and $bc\parallel ad$, as:
\begin{eqnarray*}
& & (\forall abx_1d)\, a\neq b \rightarrow
\bigvee_{n=2}^{\infty}\{(\exists x_2\ldots x_ny_1\ldots y_{n-1})\,
 x_1x_2\equiv ab \wedge (B(x_1x_2d)\vee B(x_1dx_2))\\
 & &  \bigwedge_{i=1}^{n-1}[P(x_ix_{i+1}y_{i+1}y_i)
 \wedge P(y_iy_{i+1}x_{i+2}x_{i+1})]\wedge B(x_1dx_n)\}.
 \end{eqnarray*}
Let $\Sigma$ denote the axiom system consisting of $\Delta$ as well as of the axioms
mentioned above.

 We can coordinatize models of $\Sigma$ as usual  and turn them
into right vector spaces with Archimedean
ordered fields as scalars (given that Archimedean ordered skew fields are commutative), with $B$ having its usual analytic
interpretation, and
where to any vector ${\bf x}$ we may associate a {\em norm} $\|{\bf x}\|$,
a non-negative element
of the scalar skew field, such that $\|{\bf x}\|=0$ iff ${\bf x} = {\bf
0}$,
 $\|{\bf x}\lambda\| =\|{\bf x}\|\cdot |\lambda|$, and
 $\|{\bf a} + {\bf b}\| \leq\ |{\bf a}\| + \ |{\bf b}\|$. For any vectors
${\bf a}$
and ${\bf b}$ we may thus define a {\em distance} by $d({\bf a}, {\bf
b}):=\|{\bf a - b}\|$,
and we have $ab\equiv cd$ iff $d({\bf a}, {\bf b})= d({\bf c}, {\bf d})$.

\section{ Definitional understandings of {\sc Vogt}'s and {\sc Skof}'s theorems}

We do not know whether the original variant of {\sc Vogt}'s theorem still
holds for models of $\Sigma$, i.\ e.\ if $\equiv$-preserving surjections
between models of $\Sigma$ also preserve the betweenness relation $B$.

We shall prove the following theorem, the first part of which
 is the definitional counterpart
of the weak variant of {\sc Vogt}'s theorem, the second part corresponding
to {\sc Skof}'s variant of {\sc Vogt}'s theorem, for
mappings that are not assumed surjective.
\begin{theorem}
$B$ and $\neq$ are definable in terms of $\equiv$,
and $B$ is definable by positive existential formulas in terms of $\equiv$
and $\neq$, the definitions being theorems of $\Sigma$.
\end{theorem}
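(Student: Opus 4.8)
The plan is to prove the three assertions separately, reducing the two about $B$ to the Preservation and Definability Theorem and settling the one about $\neq$ by an explicit formula. For $\neq$ in terms of $\equiv$, the axioms in (a) give $\Sigma\vdash aa\equiv bb$ and $\Sigma\vdash ab\equiv cc\rightarrow a=b$, hence $\Sigma\vdash a=b\leftrightarrow(\exists z)\,ab\equiv zz$, so $\Sigma\vdash X_1\neq X_2\leftrightarrow\neg(\exists z)(X_1X_2\equiv zz)$, whose right-hand side is an ${\sf L}(\equiv)$-formula; this also follows at once from the Preservation and Definability Theorem, since every ${\sf L}(\equiv)$-isomorphism, being a bijection, preserves $\neq$.

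For $B$, note first that an ${\sf L}(\equiv,\neq)$-homomorphism $f\colon\mathfrak A\to\mathfrak B$ between models of $\Sigma$ is exactly an injective map preserving equidistance in the forward direction (a map of the kind considered by {\sc Skof}), while an ${\sf L}(\equiv)$-isomorphism is an equidistance-preserving bijection whose inverse is again equidistance-preserving (the setting of the weakened {\sc Vogt} theorem). By the homomorphism case of the Preservation and Definability Theorem it is enough, in order to obtain a positive existential ${\sf L}(\equiv,\neq)$-definition of $B$ that is a theorem of $\Sigma$, to show that every such homomorphism preserves $B$; and by the isomorphism case it is enough, in order to obtain an ${\sf L}(\equiv)$-definition of $B$, to show that every such isomorphism preserves $B$. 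To see that these preservation statements hold one would, after coordinatizing $\mathfrak A$ and $\mathfrak B$ as normed right vector spaces as in the preceding section, run the arguments of {\sc Skof} and of {\sc Vogt} to conclude that $f$ is an affine similarity and hence preserves $B$. In the isomorphism case the two-sided preservation starts the argument off: $f$ then carries each sphere $\{x:d(x,c)=r\}$ onto a sphere about $f(c)$, so that $d(f(u),f(v))=\sigma(d(u,v))$ for a monotone bijection $\sigma$ of the set of distances; the Archimedean axiom forces $\sigma$ to be linear, after composing with a homothety $f$ becomes a surjective isometry, and the reflection argument of {\sc Mazur} and {\sc Ulam} then yields affineness. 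Substituting the ${\sf L}(\equiv)$-definition of $\neq$ into the positive existential definition of course recovers $B$ in terms of $\equiv$ alone.

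The step where I expect the real work to lie is in carrying {\sc Vogt}'s and {\sc Skof}'s arguments over from real normed spaces to arbitrary models of $\Sigma$: the scalar field of a model of $\Sigma$ need only be an Archimedean ordered field, for instance the field of real algebraic numbers, hence need not be order-complete, so the topological ingredients of \cite{vog} and of \cite[p.\ 626--627]{os} --- connectedness of the complement of a sphere, attainment of diameters --- are unavailable. In the isomorphism case this is precisely what the passage from $\Rightarrow$ to $\Leftrightarrow$ compensates for: the reduction ``spheres to spheres, centres to centres'' and the Archimedean linearization of $\sigma$ are purely order-algebraic, and the surviving surjective isometry can be dealt with either by the nested-diameter argument of {\sc Mazur} and {\sc Ulam} (after checking that the diameters in question do exist in models of $\Sigma$) or by transporting it to the real completions of the two spaces and then descending. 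In the non-surjective case one must in addition verify that {\sc Skof}'s hypotheses are satisfied in the $\Sigma$-setting, and one should confirm that the Preservation and Definability Theorem really does apply to the ${\sf L}_{\omega_1\omega}$ theory $\Sigma$, whose sole non-elementary axiom is the Archimedean one. These checks are the substance of the proof.
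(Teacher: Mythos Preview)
Your proposal inverts the paper's strategy. The paper does \emph{not} invoke the Preservation and Definability Theorem at all in the proof: it writes down, by hand, explicit ${\sf L}_{\omega_1\omega}$-formulas in $\equiv$ (and $\neq$) that define $B$ and $\neq$ in every model of $\Sigma$. The construction proceeds by first defining the relation $ab\equiv_2 cd$ (meaning $d(a,b)=2d(c,d)$), then the midpoint relation $M$ via an infinite conjunction $\bigwedge_n\varphi_n$ of successive approximations, then multiples and dyadic fractions of distances, then the additivity relation $d(a,b)+d(b,c)=d(a,c)$, and finally $B$ itself; $\neq$ is defined last, via the Archimedean axiom. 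The point of the paper is precisely to exhibit the side (i) of the equivalence directly, rather than to deduce its existence from (ii).

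Your route --- establish (ii) by adapting {\sc Vogt} and {\sc Skof} to arbitrary models of $\Sigma$, then quote the Preservation and Definability Theorem --- is logically coherent, but the step you yourself flag as ``the substance of the proof'' is a genuine gap, not a routine check. The paper states explicitly that it is \emph{open} whether {\sc Vogt}'s theorem holds over arbitrary Archimedean ordered scalar fields, and that the known proofs rely on topological properties (connectedness of the complement of a sphere) that fail in this generality. Your sketch for the isomorphism case (spheres to spheres, a monotone $\sigma$ forced linear by Archimedeanity, then {\sc Mazur}--{\sc Ulam}) skips the hard parts: monotonicity of $\sigma$ needs a $\equiv$-definition of the order $\leq$ on segments, linearity of $\sigma$ from Archimedeanity is asserted without argument, and the {\sc Mazur}--{\sc Ulam} step over an incomplete field (or the ``transport to real completions and descend'' alternative) is not carried out. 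For the homomorphism case the situation is worse: {\sc Skof}'s theorem comes with side hypotheses, so it does \emph{not} say that every injective $\equiv$-preserving map is an affine similarity, and you give no argument that those hypotheses are automatic in models of $\Sigma$. In short, you have reduced the theorem to statements the paper deliberately circumvents because it cannot prove them; the paper's explicit-formula approach is chosen exactly to avoid this obstacle, and as a by-product it \emph{yields} the preservation statements you are trying to use as input.

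Your treatment of $\neq$, incidentally, is correct and simpler than the paper's: $a\neq b\leftrightarrow\neg(\exists z)\,ab\equiv zz$ follows immediately from the axioms in (a), whereas the paper defines $\neq$ via the Archimedean axiom.
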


\begin{proof}

We first show how to define the relation $\equiv_2$in terms of $\equiv$,
with
${\bf ab}\equiv_2 {\bf cd}$ to be interpreted as $d({\bf a}, {\bf b})= 2d({\bf c},
{\bf d})$,
by
\[ab\equiv_2 cd:\leftrightarrow (\exists e)\, ae\equiv cd \wedge be\equiv
cd
\wedge [(\forall xy)\, (xa\equiv xb \wedge ya\equiv yx) \rightarrow
(\exists z)\, zc\equiv xy \wedge zd\equiv xy].\]
We can now define the midpoint relation.  To this end, we recursively
 define a sequence of
relations $\varphi_n$  by\footnote{ For the rationale behind the $\varphi_n$'s
and their function in defining $B$ cf. \cite[\S 10.4]{ac}.}
\begin{eqnarray*}
\varphi_0(a,b,x)& := &  xa\equiv xb\wedge ab\equiv_2
xa,
\mbox{ and }\\
  \varphi_{n+1}(a,b,x):& \leftrightarrow & \varphi_n(a,b,x) \wedge
[(\forall x_3)(\exists x_1x_2y)\,
\varphi_0(a,b,x_3)\\
& & \rightarrow\bigwedge_{i=1}^2 \varphi_0(a,b,x_i)  \wedge xy\equiv_2 x_3x\wedge xy\leq x_1x_2]
\end{eqnarray*}
The definition of the midpoint relation $M$, with  $M({\bf abc})$
to be interpreted
as ${\bf a} + {\bf c} = 2{\bf b}$ with ${\bf a}\neq {\bf c}$, is
\[M(abc)\leftrightarrow a\neq c \bigwedge_{n=0}^\infty \varphi_n(a,c,b).\]
Let (the conjuncts being considered to exist only insofar as the indices
$i$ of the corresponding $x_i$ exist, i.\ e.\  if $n$ is large enough):
\begin{eqnarray*}
\alpha_n(a,b,x_{n-1}): &\leftrightarrow & a\neq b\wedge (\exists x_1\ldots x_{n-2})\,
M(abx_1)  \wedge M(bx_1x_2)\bigwedge_{i=2}^{n-2} M(x_{i-1}x_ix_{i+1}),\\
\beta_k(a,b,y_k): & \leftrightarrow & a\neq b \wedge (\exists y_1\ldots y_{k_1})\,
M(ay_1b)\bigwedge_{i=1}^{k-1}M(ay_{i+1}y_i).
\end{eqnarray*}
The two formulas are to be interpreted as: $\alpha_n({\bf a,b,x})$ iff
${\bf a}\neq {\bf b}$ and ${\bf x}$ is a point on ray
$\stackrel{\rightarrow}{{\bf ab}}$, such that $d({\bf a}, {\bf x})=
n\cdot d({\bf a}, {\bf b})$, and $\beta_k({\bf a,b,y})$ iff ${\bf a}\neq {\bf b}$
and ${\bf y}$ is a point on segment
${\bf ab}$, such that $d({\bf a}, {\bf y})
= 2^{-k}\cdot d({\bf a}, {\bf b})$.

Let now
\[\psi_{n,k}(a,b,c,d): \leftrightarrow a\neq b \wedge c\neq d\wedge
(\exists euv)\, ce\equiv au \wedge de\equiv av \wedge
\beta_k(a,b,v)\wedge \alpha_n(a, v, u).\]
The interpretation of $\psi_{n,k}({\bf a,b,c,d})$ is that
${\bf a}, {\bf b}$ as well as ${\bf c}, {\bf d}$ are different,
and that there is a point ${\bf e}$ such that $d({\bf c}, {\bf e})=
n2^{-k}\cdot d({\bf a}, {\bf b})$ and
$d({\bf d}, {\bf e}) = 2^{-k}\cdot d({\bf a}, {\bf b})$. By the triangle
inequality this implies that\footnote{ We are not pedantic about writing the scalar
multipliers to the right, since all of them are rational scalars, and thus
it is irelevant on which side they are written.}
\[\frac{(n-1)\cdot d({\bf a}, {\bf b})}{2^{k}}\leq d({\bf b}, {\bf c})
\leq \frac{(n+1)\cdot d({\bf a}, {\bf b})}{2^{k}}.\]
We are now ready to define
\[\gamma(a,b,c):\leftrightarrow  \bigwedge_{k=1}^\infty \{\bigvee_{n=1}^\infty
[\psi_{n,k}(a,b,b,c)\wedge \psi_{n+2^k,k}(a,b,a,c)]\},\]
to be interpreted as `${\bf a}, {\bf b}, {\bf c}$
are three different points, such that
$d({\bf a}, {\bf b}) + d({\bf b}, {\bf c}) = d({\bf a}, {\bf c})$'.

We are finally ready to define $B$. Its definition is:
\begin{eqnarray*}
B(abc) & \leftrightarrow & a=b \vee b=c \vee \{\bigwedge_{n=1}^\infty
[(\exists m_1\ldots m_{2^n-1}) \bigwedge_{i=2}^{2^n-2}
M(m_{i-1}m_im_{i+1})\wedge M(am_1m_2)\\
& & \wedge M(m_{2^n-2}m_{2^n-1}c) \wedge (\bigvee_{i=1}^{2^n-2}
(\gamma(m_i,b,m_{i+1})\vee \gamma(a,b, m_1)\vee \gamma(m_{2^n-1},b,c))]\}
\end{eqnarray*}

We may now define $\neq$, by first defining
\[\delta(z_0,x,z_n):\leftrightarrow (\exists z_1\ldots z_{n-1})\,
\bigwedge_{i=0}^{n-1}
z_iz_{i+1}\equiv z_0x,\]
with $\delta({\bf a,b,c})$ to be interpreted as $d({\bf a}, {\bf c})
\leq n\cdot d({\bf a}, {\bf b})$. We have
\[x\neq y:\leftrightarrow (\forall z)\, \bigvee_{n=2}^\infty \delta(x,y,z).\]
\end{proof}


\begin{thebibliography}{20}
{\small \bibitem{ac} {\sc J. Aczel, J. Dhombres}, {\em Functional equations
in several variables}. Cambridge University Press, Cambridge, 1989.
\bibitem{hod} {\sc W. Hodges}, {\em Model Theory.}
  Cambridge University Press, Cambridge, 1993.
 \bibitem{kei} {\sc H. J. Keisler},
{\em Model theory for infinitary logic.}
North-Holland , Amsterdam, 1971.
\bibitem{kus}{\sc E.  Kusak},
A new approach to dimension-free affine geometry.
{\em Bull.\ Acad.\ Polon.\ Sci.\ S\'er.\ Sci.\ Math.}
{\bf  27} (1979), 875--882.
\bibitem{mak}  {\sc M. Makkai},
On the model theory of denumerably long formulas with
finite strings of quantifiers.
{\em J. Symbolic Logic\/} {\bf 34} (1969), 437--459.
\bibitem{mu} {\sc S. Mazur, S.Ulam},
Sur les transformations isom\'etriques d'espaces vectoriels, norm\'es.
{\em C. R. Acad.\ Sci.\ Paris} {\bf 194} (1932), 946--948.
\bibitem{os} {\sc  M. Omladi\v{c},P. \v{S}emrl},
On nonlinear perturbations of isometries. {\em Math.\ Ann.}
{\bf 303} (1995), 617--628.
\bibitem{pc}
{\sc S. Prie\ss -Crampe},
{\em Angeordnete Strukturen: Gruppen,
K\"{o}rper, projektive Ebenen.} Springer-Verlag,
 Berlin, 1983.
\bibitem{schna}
{\sc R. Schnabel},
 {\em Euklidische Geometrie.}
 Habilitationsschrift, Kiel, 1981.
\bibitem{sko}{\sc F. Skof},
 On maps preserving equality of distance in normed spaces.
{\em   Atti Accad.\ Sci.\
Torino Cl.\ Sci.\ Fis.\ Mat.\ Natur.} {\bf 126} (1992), 165--175.

\bibitem{szm}
 {\sc W}.~{\sc Szmielew},
 {\em From affine to Euclidean geometry}. PWN, Warsaw;
  D. Reidel, Dord\-recht, 1983.

\bibitem{vog} {\sc A. Vogt}, Maps which preserve equality of distance.
{\em Studia Math.} {\bf 45} (1973), 43--48.

{\small Department of Integrative Studies, Arizona State University West,
P. O. Box 37100, Phoenix AZ 85069-7100, USA\\
E-mail: pamb@math.west.asu.edu}}

\end{thebibliography}
\end{document}